\newtheorem{theorem}{Theorem}
\newtheorem{corollary}[theorem]{Corollary}
\newtheorem{proposition}[theorem]{Proposition}
\theoremstyle{definition}
\newtheorem*{notation*}{Notation}
\newcommand{\nequiv}{\mathrel{\not\equiv}}
\newcommand{\gray}[1]{\textcolor{Gray}{#1}}
\newcommand{\grayparen}[1]{\gray{(}{#1}\gray{)}}
\newcommand{\graysizedparen}[1]{\gray{\left( \textcolor{Black}{#1} \right)}}
\begin{document}

\title{Lucas' theorem modulo $p^2$}
\author{Eric Rowland}
\address{
	Department of Mathematics \\
	Hofstra University \\
	Hempstead, NY \\
	USA
}
\date{February 11, 2022}

\begin{abstract}
Lucas' theorem describes how to reduce a binomial coefficient $\binom{a}{b}$ modulo~$p$ by breaking off the least significant digits of $a$ and $b$ in base $p$.
We characterize the pairs of these digits for which Lucas' theorem holds modulo~$p^2$.
This characterization is naturally expressed using symmetries of Pascal's triangle.
\end{abstract}

\maketitle

\section{Introduction}

In 1878, Lucas~\cite{Lucas} discovered a formula for computing the residue of a binomial coefficient modulo~$p$, where $p$ is a prime.
Namely, if $r, s \in \{0, 1, \dots, p - 1\}$ and $a$ and $b$ are nonnegative integers, then
\begin{equation}\label{Lucas congruence}
	\binom{p a + r}{p b + s} \equiv \binom{a}{b} \binom{r}{s} \mod p.
\end{equation}
This congruence can also be written using base-$p$ representations.
Let the base-$p$ representations of $a$ and $b$ be $a_\ell \cdots a_1 a_0$ and $b_\ell \cdots b_1 b_0$, where we have made them the same length by padding the shorter representation with $0$s if necessary.
Iterating Congruence~\eqref{Lucas congruence} gives
\[
	\binom{a}{b} \equiv \binom{a_\ell}{b_\ell} \cdots \binom{a_1}{b_1} \binom{a_0}{b_0} \mod p.
\]

Several variants and generalizations of Lucas' theorem are known.
Me\v{s}trovi\'c~\cite{Mestrovic} gives an excellent survey.
In particular, it is natural to ask for Lucas-type congruences modulo higher powers of $p$.
We refer to a congruence of the form
\begin{equation}\label{general Lucas congruence}
	\binom{p a + r}{p b + s} \equiv \binom{a}{b} \binom{r}{s} \mod p^\alpha
\end{equation}
where $r, s \in \{0, 1, \dots, p - 1\}$ as a \emph{Lucas congruence}.
This congruence does not hold in general, but it does hold for certain values of $\alpha, p, r, s, a, b$.
Even prior to Lucas' work, Babbage~\cite{Babbage} in 1819 showed that
\begin{equation}\label{Babbage congruence}
	\binom{2 p - 1}{p - 1} \equiv 1 \mod p^2
\end{equation}
for all $p \geq 3$; this is a Lucas congruence where $r = s = p - 1$, $a = 1$, and $b = 0$.
In 1862, Wolstenholme~\cite{Wolstenholme} showed that Babbage's congruence holds modulo~$p^3$ if $p \geq 5$.
This was generalized by Glaisher~\cite[page 21]{Glaisher} in 1900 to the Lucas congruence
\begin{equation}
	\binom{p a - 1}{p - 1} \equiv 1 \mod p^3
\end{equation}
for all $a \geq 1$, again for $p \geq 5$.
Since $a \binom{p a - 1}{p - 1} = \binom{p a}{p}$, this implies $\binom{p a}{p} \equiv a \mod p^3$, which itself can be generalized to the Lucas congruence
\begin{equation}\label{Jacobsthal congruence}
	\binom{p a}{p b} \equiv \binom{a}{b} \mod p^3
\end{equation}
for $a \geq 0$, $b \geq 0$, and $p \geq 5$.
Congruence~\eqref{Jacobsthal congruence} is often attributed to Ljunggren~\cite{Ljunggren--Jacobsthal}.
However, Ljunggren only considered the special case $a = p b$ and was primarily interested in the case $a = p^n, b = p^{n - 1}$.
The general form seems to have been first obtained by Jacobsthal in the same paper~\cite{Ljunggren--Jacobsthal}.
It was independently rediscovered several times, including by Kazandzidis~\cite{Kazandzidis} and Bailey~\cite{Bailey}.
Siong~\cite{Siong} also gave a proof that Congruence~\eqref{Jacobsthal congruence} follows from Glaisher's congruence.
For $p = 2$ and $p = 3$, Congruence~\eqref{Jacobsthal congruence} does not hold modulo~$p^3$ in general but does hold modulo~$p^2$.

While each of the congruences \eqref{Babbage congruence}--\eqref{Jacobsthal congruence} uses a single pair $(r, s)$ of digits, some results of Bailey~\cite{Bailey} allow these digits to be general.
For every prime $p$, Bailey proved that
\[
	\binom{p^2 a + r}{p^2 b + s} \equiv \binom{a}{b} \binom{r}{s} \mod p^2
\]
for all $r, s \in \{0, 1, \dots, p - 1\}$, $a \geq 0$, and $b \geq 0$.
The equivalent form $\binom{p \grayparen{p a} + r}{p \grayparen{p b} + s} \equiv \binom{p a}{p b} \binom{r}{s} \mod p^2$ is a Lucas congruence.
For $p \geq 5$, Bailey also proved
\[
	\binom{p^3 a + r}{p^3 b + s} \equiv \binom{a}{b} \binom{r}{s} \mod p^3.
\]
These exponents $3$ were subsequently increased by Davis and Webb~\cite{Davis--Webb 1993}.
A further extension was found by Zhao~\cite{Zhao}, and generalizations of Lucas' theorem modulo~$p^\alpha$ for general $\alpha \geq 1$ were given by Davis and Webb~\cite{Davis--Webb 1990}, Granville~\cite{Granville}, and Yassawi and the author~\cite[Theorem~5.3]{Rowland--Yassawi}, although these results depart from the form of Congruence~\eqref{general Lucas congruence}.

In this article we consider the following question.
For which pairs $(r, s)$ of base-$p$ digits does the Lucas congruence
\[
	\binom{p a + r}{p b + s} \equiv \binom{a}{b} \binom{r}{s} \mod p^2
\]
hold for all $a \geq 0$ and $b \geq 0$?
The set of such pairs is our primary object of interest.

\begin{notation*}
For each prime $p$, let
\begin{multline*}
	D(p) =
	\bigg\{(r, s) \in \{0, 1, \dots, p - 1\}^2 : \\
	\text{$\binom{p a + r}{p b + s} \equiv \binom{a}{b} \binom{r}{s} \mod p^2$ for all $a \geq 0, b \geq 0$}\bigg\}.
\end{multline*}
\end{notation*}

\section{Description of the set $D(p)$}

Congruence~\eqref{Jacobsthal congruence} implies that $D(p)$ is nonempty for each prime $p \geq 5$, since $(0, 0) \in D(p)$.
Computer experiments suggest that $D(p)$ contains additional pairs as well.
For example, we will show that $D(3) = \{(0, 0), (2, 0), (2, 2)\}$ and $D(7) = \{(0, 0), (4, 2), (6, 0), (6, 6)\}$.
The following table highlights the binomial coefficients $\binom{r}{s}$ corresponding to points $(r, s) \in D(7)$.
\begin{center}
\large
\resizebox{.4\textwidth}{!}{
\begin{tikzpicture}[
% minimum size 3/(2 Sqrt[2])
	square/.style={shape=regular polygon, regular polygon sides=4, minimum size=1.0607cm, draw=black!60, inner sep=0, anchor=south, very thin},
	filledsquare/.style={shape=regular polygon, regular polygon sides=4, minimum size=1.0607cm, draw=lightgray!70!blue!80, inner sep=0, anchor=south, fill=lightgray!70!blue, fill opacity=.5, very thin}
]
% unfilled squares
\pgfmathsetmacro\rowcountminusone{6}
\foreach \j in {0,...,\rowcountminusone}{
	\foreach \i in {0,...,\rowcountminusone}{
		\node[square] (s\i;\j) at ({\i*3/4},{\j*3/4 - 4.885}) {};
	}
}
% filled squares
\node[filledsquare] (s0;0) at ({0*3/4},{0*3/4 - 4.885}) {};
\node[filledsquare] (s0;6) at ({0*3/4},{6*3/4 - 4.885}) {};
\node[filledsquare] (s2;2) at ({2*3/4},{2*3/4 - 4.885}) {};
\node[filledsquare] (s6;0) at ({6*3/4},{0*3/4 - 4.885}) {};
% entries
\node[] at (0*3/2,0*3/2) {$1$};
\node[] at (.5*3/2,0*3/2) {\gray{$0$}};
\node[] at (1*3/2,0*3/2) {\gray{$0$}};
\node[] at (1.5*3/2,0*3/2) {\gray{$0$}};
\node[] at (2*3/2,0*3/2) {\gray{$0$}};
\node[] at (2.5*3/2,0*3/2) {\gray{$0$}};
\node[] at (3*3/2,0*3/2) {\gray{$0$}};
\node[] at (0*3/2,-.5*3/2) {$1$};
\node[] at (.5*3/2,-.5*3/2) {$1$};
\node[] at (1*3/2,-.5*3/2) {\gray{$0$}};
\node[] at (1.5*3/2,-.5*3/2) {\gray{$0$}};
\node[] at (2*3/2,-.5*3/2) {\gray{$0$}};
\node[] at (2.5*3/2,-.5*3/2) {\gray{$0$}};
\node[] at (3*3/2,-.5*3/2) {\gray{$0$}};
\node[] at (0*3/2,-1*3/2) {$1$};
\node[] at (.5*3/2,-1*3/2) {$2$};
\node[] at (1*3/2,-1*3/2) {$1$};
\node[] at (1.5*3/2,-1*3/2) {\gray{$0$}};
\node[] at (2*3/2,-1*3/2) {\gray{$0$}};
\node[] at (2.5*3/2,-1*3/2) {\gray{$0$}};
\node[] at (3*3/2,-1*3/2) {\gray{$0$}};
\node[] at (0*3/2,-1.5*3/2) {$1$};
\node[] at (.5*3/2,-1.5*3/2) {$3$};
\node[] at (1*3/2,-1.5*3/2) {$3$};
\node[] at (1.5*3/2,-1.5*3/2) {$1$};
\node[] at (2*3/2,-1.5*3/2) {\gray{$0$}};
\node[] at (2.5*3/2,-1.5*3/2) {\gray{$0$}};
\node[] at (3*3/2,-1.5*3/2) {\gray{$0$}};
\node[] at (0*3/2,-2*3/2) {$1$};
\node[] at (.5*3/2,-2*3/2) {$4$};
\node[] at (1*3/2,-2*3/2) {$6$};
\node[] at (1.5*3/2,-2*3/2) {$4$};
\node[] at (2*3/2,-2*3/2) {$1$};
\node[] at (2.5*3/2,-2*3/2) {\gray{$0$}};
\node[] at (3*3/2,-2*3/2) {\gray{$0$}};
\node[] at (0*3/2,-2.5*3/2) {$1$};
\node[] at (.5*3/2,-2.5*3/2) {$5$};
\node[] at (1*3/2,-2.5*3/2) {$10$};
\node[] at (1.5*3/2,-2.5*3/2) {$10$};
\node[] at (2*3/2,-2.5*3/2) {$5$};
\node[] at (2.5*3/2,-2.5*3/2) {$1$};
\node[] at (3*3/2,-2.5*3/2) {\gray{$0$}};
\node[] at (0*3/2,-3*3/2) {$1$};
\node[] at (.5*3/2,-3*3/2) {$6$};
\node[] at (1*3/2,-3*3/2) {$15$};
\node[] at (1.5*3/2,-3*3/2) {$20$};
\node[] at (2*3/2,-3*3/2) {$15$};
\node[] at (2.5*3/2,-3*3/2) {$6$};
\node[] at (3*3/2,-3*3/2) {$1$};
\end{tikzpicture}
}
\normalsize
\end{center}

Our first result is that the zeros in this table do not correspond to points in $D(p)$.

\begin{proposition}\label{zero region}
Let $p$ be a prime.
If $s > r$, then $(r, s) \notin D(p)$.
\end{proposition}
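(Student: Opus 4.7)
The plan is to exhibit a concrete pair $(a,b)$ witnessing the failure of the Lucas congruence whenever $s>r$. When $s>r$ we have $\binom{r}{s}=0$, so the right-hand side of the congruence vanishes and it suffices to find $a,b\ge 0$ with $\binom{pa+r}{pb+s}\not\equiv 0\pmod{p^2}$. The simplest choice is $a=1$, $b=0$, reducing the task to showing that $\binom{p+r}{s}$ is divisible by $p$ exactly once.

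First I would write the binomial coefficient as the falling-factorial quotient
\[
	\binom{p+r}{s} = \frac{(p+r)(p+r-1)\cdots(p+r-s+1)}{s!}.
\]
The key observation is that because $0\le r<s\le p-1$, the range of indices $k=0,1,\ldots,s-1$ in the numerator factor $(p+r-k)$ includes exactly one value of $k$, namely $k=r$, for which $p+r-k=p$. For all other $k$ in this range, $p+r-k$ is congruent modulo $p$ to a nonzero element of $\{1,\ldots,r\}\cup\{-1,\ldots,-(s-r-1)\}$, so it is a unit in $\mathbb{Z}_p$. Similarly $s!$ is a unit in $\mathbb{Z}_p$ since $s\le p-1$.

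It follows immediately that $v_p\!\left(\binom{p+r}{s}\right)=1$, so $\binom{p+r}{s}\not\equiv 0\pmod{p^2}$, while $\binom{1}{0}\binom{r}{s}=0$. Hence $(r,s)\notin D(p)$. The argument is essentially a direct Kummer-style valuation computation; there is no real obstacle. One could equally well cite Kummer's theorem on $v_p(\binom{m}{n})$ and count the single carry that occurs when adding $pb+s=s$ and $(p+r)-s$ in base $p$, but the falling-factorial form keeps the proof elementary and self-contained.
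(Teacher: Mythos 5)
Your proposal is correct and matches the paper's proof: both take $a = 1$, $b = 0$, observe that $\binom{r}{s} = 0$, and show that $\binom{p+r}{s}$ has $p$-adic valuation exactly $1$. You simply spell out the valuation computation that the paper asserts in one line.
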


\begin{proof}
Let $a = 1$ and $b = 0$.
The binomial coefficient $\binom{p a + r}{p b + s} = \binom{p + r}{s} = \frac{\grayparen{p + r}!}{s! \grayparen{p + r - s}!}$ is divisible by $p$ but not $p^2$.
On the other hand, $\binom{a}{b} \binom{r}{s} = \binom{r}{s} = 0$ is divisible by $p^2$.
Therefore $\binom{p a + r}{p b + s} \nequiv \binom{a}{b} \binom{r}{s} \mod p^2$.
\end{proof}

In light of Proposition~\ref{zero region}, we omit points $(r, s)$ where $s > r$ from the previous table.
Then we shear the remaining triangle:
\begin{center}
\large
\resizebox{.4\textwidth}{!}{
\begin{tikzpicture}[
	hexagon/.style={shape=regular polygon, regular polygon sides=6, minimum size=1cm, draw, inner sep=0, anchor=south, rotate=30, ultra thin},
	filledhexagon/.style={shape=regular polygon, regular polygon sides=6, minimum size=1cm, draw=lightgray!70!blue!80, inner sep=0, anchor=south, fill=lightgray!70!blue, fill opacity=.5, rotate=30, ultra thin}
]
% unfilled hexagons
\pgfmathsetmacro\rowcountminusone{6}
\foreach \j in {0,...,\rowcountminusone}{
	\pgfmathsetmacro\rowcountminusoneminusj{\rowcountminusone-\j}
	\foreach \i in {0,...,\rowcountminusoneminusj}{
		% This draws the hexagon corresponding to \binom{p-1-\j}{\i}.
		\node[hexagon] (h\i;\j) at ({(\i+\j/2)*sin(60) + .21},{\j*3/4 - .37}) {};
	}
}
% filled hexagons
\node[filledhexagon] (f0;6) at ({(0+6/2)*sin(60) + .21},{6*3/4 - .37}) {};
\node[filledhexagon] (f2;2) at ({(2+2/2)*sin(60) + .21},{2*3/4 - .37}) {};
\node[filledhexagon] (f0;0) at ({(0+0/2)*sin(60) + .21},{0*3/4 - .37}) {};
\node[filledhexagon] (f6;0) at ({(6+0/2)*sin(60) + .21},{0*3/4 - .37}) {};
% entries
%\node[] at ({(\i+\j/2)*sin(60)},{\j*3/4}) {$\binom{\rowcountminusone-\j}{\i}$};
\node[] at ({(0+6/2)*sin(60)},{6*3/4}) {$1$};
\node[] at ({(0+5/2)*sin(60)},{5*3/4}) {$1$};
\node[] at ({(1+5/2)*sin(60)},{5*3/4}) {$1$};
\node[] at ({(0+4/2)*sin(60)},{4*3/4}) {$1$};
\node[] at ({(1+4/2)*sin(60)},{4*3/4}) {$2$};
\node[] at ({(2+4/2)*sin(60)},{4*3/4}) {$1$};
\node[] at ({(0+3/2)*sin(60)},{3*3/4}) {$1$};
\node[] at ({(1+3/2)*sin(60)},{3*3/4}) {$3$};
\node[] at ({(2+3/2)*sin(60)},{3*3/4}) {$3$};
\node[] at ({(3+3/2)*sin(60)},{3*3/4}) {$1$};
\node[] at ({(0+2/2)*sin(60)},{2*3/4}) {$1$};
\node[] at ({(1+2/2)*sin(60)},{2*3/4}) {$4$};
\node[] at ({(2+2/2)*sin(60)},{2*3/4}) {$6$};
\node[] at ({(3+2/2)*sin(60)},{2*3/4}) {$4$};
\node[] at ({(4+2/2)*sin(60)},{2*3/4}) {$1$};
\node[] at ({(0+1/2)*sin(60)},{1*3/4}) {$1$};
\node[] at ({(1+1/2)*sin(60)},{1*3/4}) {$5$};
\node[] at ({(2+1/2)*sin(60)},{1*3/4}) {$10$};
\node[] at ({(3+1/2)*sin(60)},{1*3/4}) {$10$};
\node[] at ({(4+1/2)*sin(60)},{1*3/4}) {$5$};
\node[] at ({(5+1/2)*sin(60)},{1*3/4}) {$1$};
\node[] at ({(0+0/2)*sin(60)},{0*3/4}) {$1$};
\node[] at ({(1+0/2)*sin(60)},{0*3/4}) {$6$};
\node[] at ({(2+0/2)*sin(60)},{0*3/4}) {$15$};
\node[] at ({(3+0/2)*sin(60)},{0*3/4}) {$20$};
\node[] at ({(4+0/2)*sin(60)},{0*3/4}) {$15$};
\node[] at ({(5+0/2)*sin(60)},{0*3/4}) {$6$};
\node[] at ({(6+0/2)*sin(60)},{0*3/4}) {$1$};
\end{tikzpicture}
}
\normalsize
\end{center}
For $p = 11$, the set $D(11)$ contains $9$ pairs of digits, arranged as follows.
\begin{center}
\large
\resizebox{.4\textwidth}{!}{
\begin{tikzpicture}[
	hexagon/.style={shape=regular polygon, regular polygon sides=6, minimum size=1cm, draw, inner sep=0, anchor=south, rotate=30, ultra thin},
	filledhexagon/.style={shape=regular polygon, regular polygon sides=6, minimum size=1cm, draw=lightgray!70!blue!80, inner sep=0, anchor=south, fill=lightgray!70!blue, fill opacity=.5, rotate=30, ultra thin}
]
% unfilled hexagons
\pgfmathsetmacro\rowcountminusone{10}
\foreach \j in {0,...,\rowcountminusone}{
	\pgfmathsetmacro\rowcountminusoneminusj{\rowcountminusone-\j}
	\foreach \i in {0,...,\rowcountminusoneminusj}{
		% This draws the hexagon corresponding to \binom{p-1-\j}{\i}.
		\node[hexagon] (h\i;\j) at ({(\i+\j/2)*sin(60) + .21},{\j*3/4 - .37}) {};
	}
}
% filled hexagons
\node[filledhexagon] (f0;10) at ({(0+10/2)*sin(60) + .21},{10*3/4 - .37}) {};
\node[filledhexagon] (f0;7) at ({(0+7/2)*sin(60) + .21},{7*3/4 - .37}) {};
\node[filledhexagon] (f3;7) at ({(3+7/2)*sin(60) + .21},{7*3/4 - .37}) {};
\node[filledhexagon] (f0;3) at ({(0+3/2)*sin(60) + .21},{3*3/4 - .37}) {};
\node[filledhexagon] (f7;3) at ({(7+3/2)*sin(60) + .21},{3*3/4 - .37}) {};
\node[filledhexagon] (f0;0) at ({(0+0/2)*sin(60) + .21},{0*3/4 - .37}) {};
\node[filledhexagon] (f3;0) at ({(3+0/2)*sin(60) + .21},{0*3/4 - .37}) {};
\node[filledhexagon] (f7;0) at ({(7+0/2)*sin(60) + .21},{0*3/4 - .37}) {};
\node[filledhexagon] (f10;0) at ({(10+0/2)*sin(60) + .21},{0*3/4 - .37}) {};
% entries
%\node[] at ({(\i+\j/2)*sin(60)},{\j*3/4}) {$\binom{\rowcountminusone-\j}{\i}$};
\node[] at ({(0+10/2)*sin(60)},{10*3/4}) {$1$};
\node[] at ({(0+9/2)*sin(60)},{9*3/4}) {$1$};
\node[] at ({(1+9/2)*sin(60)},{9*3/4}) {$1$};
\node[] at ({(0+8/2)*sin(60)},{8*3/4}) {$1$};
\node[] at ({(1+8/2)*sin(60)},{8*3/4}) {$2$};
\node[] at ({(2+8/2)*sin(60)},{8*3/4}) {$1$};
\node[] at ({(0+7/2)*sin(60)},{7*3/4}) {$1$};
\node[] at ({(1+7/2)*sin(60)},{7*3/4}) {$3$};
\node[] at ({(2+7/2)*sin(60)},{7*3/4}) {$3$};
\node[] at ({(3+7/2)*sin(60)},{7*3/4}) {$1$};
\node[] at ({(0+6/2)*sin(60)},{6*3/4}) {$1$};
\node[] at ({(1+6/2)*sin(60)},{6*3/4}) {$4$};
\node[] at ({(2+6/2)*sin(60)},{6*3/4}) {$6$};
\node[] at ({(3+6/2)*sin(60)},{6*3/4}) {$4$};
\node[] at ({(4+6/2)*sin(60)},{6*3/4}) {$1$};
\node[] at ({(0+5/2)*sin(60)},{5*3/4}) {$1$};
\node[] at ({(1+5/2)*sin(60)},{5*3/4}) {$5$};
\node[] at ({(2+5/2)*sin(60)},{5*3/4}) {$10$};
\node[] at ({(3+5/2)*sin(60)},{5*3/4}) {$10$};
\node[] at ({(4+5/2)*sin(60)},{5*3/4}) {$5$};
\node[] at ({(5+5/2)*sin(60)},{5*3/4}) {$1$};
\node[] at ({(0+4/2)*sin(60)},{4*3/4}) {$1$};
\node[] at ({(1+4/2)*sin(60)},{4*3/4}) {$6$};
\node[] at ({(2+4/2)*sin(60)},{4*3/4}) {$15$};
\node[] at ({(3+4/2)*sin(60)},{4*3/4}) {$20$};
\node[] at ({(4+4/2)*sin(60)},{4*3/4}) {$15$};
\node[] at ({(5+4/2)*sin(60)},{4*3/4}) {$6$};
\node[] at ({(6+4/2)*sin(60)},{4*3/4}) {$1$};
\node[] at ({(0+3/2)*sin(60)},{3*3/4}) {$1$};
\node[] at ({(1+3/2)*sin(60)},{3*3/4}) {$7$};
\node[] at ({(2+3/2)*sin(60)},{3*3/4}) {$21$};
\node[] at ({(3+3/2)*sin(60)},{3*3/4}) {$35$};
\node[] at ({(4+3/2)*sin(60)},{3*3/4}) {$35$};
\node[] at ({(5+3/2)*sin(60)},{3*3/4}) {$21$};
\node[] at ({(6+3/2)*sin(60)},{3*3/4}) {$7$};
\node[] at ({(7+3/2)*sin(60)},{3*3/4}) {$1$};
\node[] at ({(0+2/2)*sin(60)},{2*3/4}) {$1$};
\node[] at ({(1+2/2)*sin(60)},{2*3/4}) {$8$};
\node[] at ({(2+2/2)*sin(60)},{2*3/4}) {$28$};
\node[] at ({(3+2/2)*sin(60)},{2*3/4}) {$56$};
\node[] at ({(4+2/2)*sin(60)},{2*3/4}) {$70$};
\node[] at ({(5+2/2)*sin(60)},{2*3/4}) {$56$};
\node[] at ({(6+2/2)*sin(60)},{2*3/4}) {$28$};
\node[] at ({(7+2/2)*sin(60)},{2*3/4}) {$8$};
\node[] at ({(8+2/2)*sin(60)},{2*3/4}) {$1$};
\node[] at ({(0+1/2)*sin(60)},{1*3/4}) {$1$};
\node[] at ({(1+1/2)*sin(60)},{1*3/4}) {$9$};
\node[] at ({(2+1/2)*sin(60)},{1*3/4}) {$36$};
\node[] at ({(3+1/2)*sin(60)},{1*3/4}) {$84$};
\node[] at ({(4+1/2)*sin(60)},{1*3/4}) {$126$};
\node[] at ({(5+1/2)*sin(60)},{1*3/4}) {$126$};
\node[] at ({(6+1/2)*sin(60)},{1*3/4}) {$84$};
\node[] at ({(7+1/2)*sin(60)},{1*3/4}) {$36$};
\node[] at ({(8+1/2)*sin(60)},{1*3/4}) {$9$};
\node[] at ({(9+1/2)*sin(60)},{1*3/4}) {$1$};
\node[] at ({(0+0/2)*sin(60)},{0*3/4}) {$1$};
\node[] at ({(1+0/2)*sin(60)},{0*3/4}) {$10$};
\node[] at ({(2+0/2)*sin(60)},{0*3/4}) {$45$};
\node[] at ({(3+0/2)*sin(60)},{0*3/4}) {$120$};
\node[] at ({(4+0/2)*sin(60)},{0*3/4}) {$210$};
\node[] at ({(5+0/2)*sin(60)},{0*3/4}) {$252$};
\node[] at ({(6+0/2)*sin(60)},{0*3/4}) {$210$};
\node[] at ({(7+0/2)*sin(60)},{0*3/4}) {$120$};
\node[] at ({(8+0/2)*sin(60)},{0*3/4}) {$45$};
\node[] at ({(9+0/2)*sin(60)},{0*3/4}) {$10$};
\node[] at ({(10+0/2)*sin(60)},{0*3/4}) {$1$};
\end{tikzpicture}
}
\normalsize
\end{center}
For $p = 17$, $p = 29$, and $p = 37$ the pairs in $D(p)$ appear in the following locations.
\begin{center}
%p = 17
\resizebox{.25\textwidth}{!}{
\begin{tikzpicture}[
	hexagon/.style={shape=regular polygon, regular polygon sides=6, minimum size=1cm, draw=black!30, inner sep=0, anchor=south, rotate=30, ultra thin},
	filledhexagon/.style={shape=regular polygon, regular polygon sides=6, minimum size=1cm, draw=lightgray!70!blue!80, inner sep=0, anchor=south, fill=lightgray!70!blue, fill opacity=.8, rotate=30, ultra thin}
]
% unfilled hexagons
\pgfmathsetmacro\rowcountminusone{16}
\foreach \j in {0,...,\rowcountminusone}{
	\pgfmathsetmacro\rowcountminusoneminusj{\rowcountminusone-\j}
	\foreach \i in {0,...,\rowcountminusoneminusj}{
		\node[hexagon] (h\i;\j) at ({(\i+\j/2)*sin(60)},{\j*3/4}) {};
	}
}
% filled hexagons
\node[filledhexagon] (f0;0) at ({(0+0/2)*sin(60)},{0*3/4}) {};
\node[filledhexagon] (f7;2) at ({(7+2/2)*sin(60)},{2*3/4}) {};
\node[filledhexagon] (f2;7) at ({(2+7/2)*sin(60)},{7*3/4}) {};
\node[filledhexagon] (f7;7) at ({(7+7/2)*sin(60)},{7*3/4}) {};
\node[filledhexagon] (f16;0) at ({(16+0/2)*sin(60)},{0*3/4}) {};
\node[filledhexagon] (f0;16) at ({(0+16/2)*sin(60)},{16*3/4}) {};
\end{tikzpicture}
}
\qquad \qquad
%p = 29
\resizebox{.25\textwidth}{!}{
\begin{tikzpicture}[
	hexagon/.style={shape=regular polygon, regular polygon sides=6, minimum size=1cm, draw=black!30, inner sep=0, anchor=south, rotate=30, ultra thin},
	filledhexagon/.style={shape=regular polygon, regular polygon sides=6, minimum size=1cm, draw=lightgray!70!blue!80, inner sep=0, anchor=south, fill=lightgray!70!blue, fill opacity=.8, rotate=30, ultra thin}
]
% unfilled hexagons
\pgfmathsetmacro\rowcountminusone{28}
\foreach \j in {0,...,\rowcountminusone}{
	\pgfmathsetmacro\rowcountminusoneminusj{\rowcountminusone-\j}
	\foreach \i in {0,...,\rowcountminusoneminusj}{
		\node[hexagon] (h\i;\j) at ({(\i+\j/2)*sin(60)},{\j*3/4}) {};
	}
}
% filled hexagons
\node[filledhexagon] (f0;0) at ({(0+0/2)*sin(60)},{0*3/4}) {};
\node[filledhexagon] (f13;0) at ({(13+0/2)*sin(60)},{0*3/4}) {};
\node[filledhexagon] (f0;13) at ({(0+13/2)*sin(60)},{13*3/4}) {};
\node[filledhexagon] (f15;0) at ({(15+0/2)*sin(60)},{0*3/4}) {};
\node[filledhexagon] (f0;15) at ({(0+15/2)*sin(60)},{15*3/4}) {};
\node[filledhexagon] (f28;0) at ({(28+0/2)*sin(60)},{0*3/4}) {};
\node[filledhexagon] (f15;13) at ({(15+13/2)*sin(60)},{13*3/4}) {};
\node[filledhexagon] (f13;15) at ({(13+15/2)*sin(60)},{15*3/4}) {};
\node[filledhexagon] (f0;28) at ({(0+28/2)*sin(60)},{28*3/4}) {};
\end{tikzpicture}
}
\qquad \qquad
%p = 37
\resizebox{.25\textwidth}{!}{
\begin{tikzpicture}[
	hexagon/.style={shape=regular polygon, regular polygon sides=6, minimum size=1cm, draw=black!30, inner sep=0, anchor=south, rotate=30, ultra thin},
	filledhexagon/.style={shape=regular polygon, regular polygon sides=6, minimum size=1cm, draw=lightgray!70!blue!80, inner sep=0, anchor=south, fill=lightgray!70!blue, fill opacity=.8, rotate=30, ultra thin}
]
% unfilled hexagons
\pgfmathsetmacro\rowcountminusone{36}
\foreach \j in {0,...,\rowcountminusone}{
	\pgfmathsetmacro\rowcountminusoneminusj{\rowcountminusone-\j}
	\foreach \i in {0,...,\rowcountminusoneminusj}{
		\node[hexagon] (h\i;\j) at ({(\i+\j/2)*sin(60)},{\j*3/4}) {};
	}
}
% filled hexagons
\node[filledhexagon] (f0;0) at ({(0+0/2)*sin(60)},{0*3/4}) {};
\node[filledhexagon] (f3;3) at ({(3+3/2)*sin(60)},{3*3/4}) {};
\node[filledhexagon] (f17;0) at ({(17+0/2)*sin(60)},{0*3/4}) {};
\node[filledhexagon] (f0;17) at ({(0+17/2)*sin(60)},{17*3/4}) {};
\node[filledhexagon] (f19;0) at ({(19+0/2)*sin(60)},{0*3/4}) {};
\node[filledhexagon] (f0;19) at ({(0+19/2)*sin(60)},{19*3/4}) {};
\node[filledhexagon] (f14;8) at ({(14+8/2)*sin(60)},{8*3/4}) {};
\node[filledhexagon] (f8;14) at ({(8+14/2)*sin(60)},{14*3/4}) {};
\node[filledhexagon] (f12;12) at ({(12+12/2)*sin(60)},{12*3/4}) {};
\node[filledhexagon] (f14;14) at ({(14+14/2)*sin(60)},{14*3/4}) {};
\node[filledhexagon] (f30;3) at ({(30+3/2)*sin(60)},{3*3/4}) {};
\node[filledhexagon] (f3;30) at ({(3+30/2)*sin(60)},{30*3/4}) {};
\node[filledhexagon] (f36;0) at ({(36+0/2)*sin(60)},{0*3/4}) {};
\node[filledhexagon] (f19;17) at ({(19+17/2)*sin(60)},{17*3/4}) {};
\node[filledhexagon] (f17;19) at ({(17+19/2)*sin(60)},{19*3/4}) {};
\node[filledhexagon] (f0;36) at ({(0+36/2)*sin(60)},{36*3/4}) {};
\end{tikzpicture}
} \\
$p = 17$ \hspace{3.44cm} $p = 29$ \hspace{3.44cm} $p = 37$
\end{center}
These pictures suggest that $D(p)$ is invariant under the symmetries of the equilateral triangle!

Reflection symmetry about the vertical axis is not altogether surprising, since Pascal's triangle also exhibits this symmetry.
We establish this in Proposition~\ref{reflection symmetry}.
However, the rotational symmetry of $D(p)$ is unexpected.

To identify the image of $(r, s)$ under rotation, we use the fact that counterclockwise rotation by $120\degree$ is equivalent to the composition of two reflections:
\begin{center}
	\includegraphics[width=.26\textwidth]{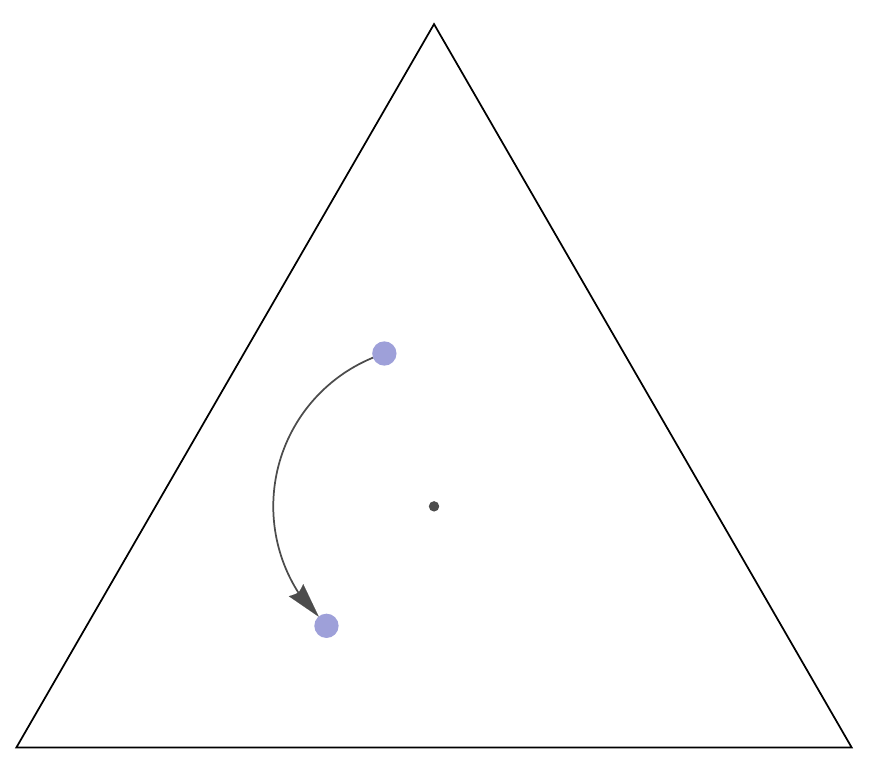}
	\hspace{1cm}
	\includegraphics[width=.26\textwidth]{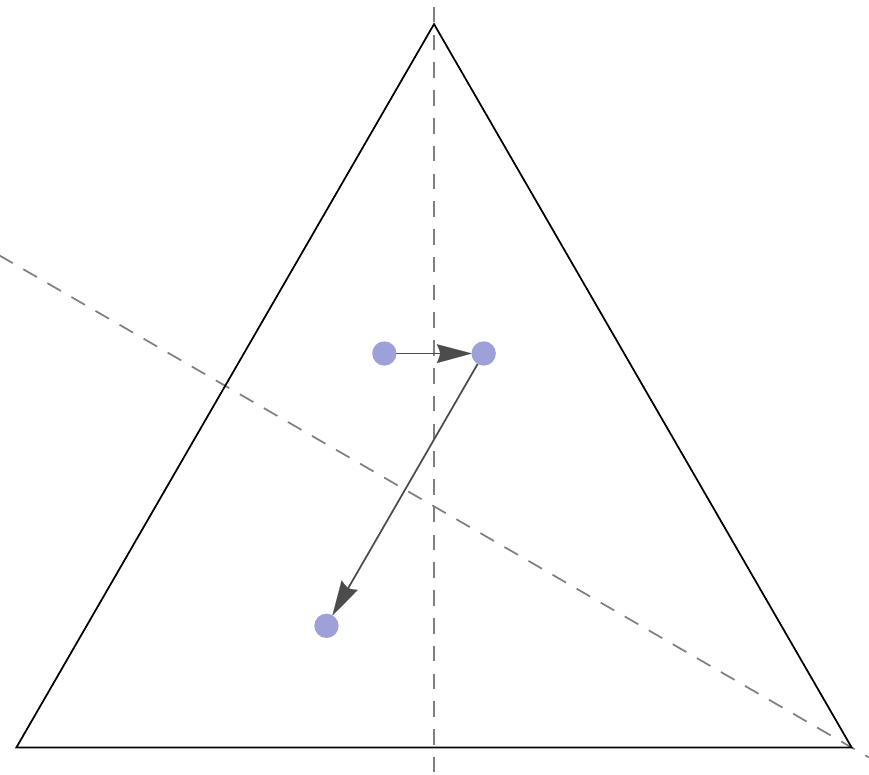}
\end{center}
The first reflection is through the vertical altitude of the triangle.
This reflection maps the point $(r, s)$ to $(r, r - s)$.
The second reflection is through the altitude passing through the lower right vertex.
This reflection maps $(r, s)$ to $(p - 1 - r + s, s)$, as can be seen by shearing so that this altitude is horizontal.
Composing these reflections shows that the rotation maps $(r, s)$ to $(p - 1 - s, r - s)$.
Therefore the three binomial coefficients visited by the orbit of $(r, s)$ under rotation by $120\degree$ are
\[
	\binom{r}{s}, \binom{p - 1 - s}{r - s}, \binom{p - 1 - r + s}{p - 1 - r},
\]
the third of which is equal to $\binom{p - 1 - r + s}{s}$.

In general, these three binomial coefficients are not equal, nor are they congruent modulo~$p$.
However, we will show in Corollary~\ref{rotation} that they do satisfy a congruence modulo~$p$ if we multiply them by the correct signs.
Furthermore, the elements of $D(p)$ can be characterized as the pairs $(r, s)$ for which this congruence holds not just modulo~$p$ but modulo~$p^2$.

\begin{theorem}\label{characterization}
Let $p$ be a prime, and let $r, s \in \{0, 1, \dots, p - 1\}$.
The congruence
\[
	\binom{p a + r}{p b + s} \equiv \binom{a}{b} \binom{r}{s} \mod p^2
\]
holds for all $a \geq 0$ and $b \geq 0$ if and only if $s \leq r$ and
\begin{equation}\label{rotation congruence}
	\binom{r}{s}
	\equiv \grayparen{-1}^{r - s} \binom{p - 1 - s}{r - s}
	\equiv \grayparen{-1}^s \binom{p - 1 - r + s}{s} \mod p^2.
\end{equation}
\end{theorem}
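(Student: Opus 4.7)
The plan is to expand $\binom{pa+r}{pb+s}$ modulo $p^2$, isolate the linear-in-$p$ correction to the Lucas product $\binom{a}{b}\binom{r}{s}$, and match the vanishing of this correction against the two rotation congruences. Applying Vandermonde's identity,
\[
	\binom{pa+r}{pb+s} = \sum_{j=0}^{r}\binom{pa}{pb+s-j}\binom{r}{j},
\]
the term $j = s$ contributes $\binom{pa}{pb}\binom{r}{s} \equiv \binom{a}{b}\binom{r}{s} \pmod{p^2}$ by the mod-$p^2$ form of Jacobsthal's congruence~\eqref{Jacobsthal congruence}, which holds for every prime $p$. For $j \neq s$ one has $|s - j| < p$, so $pb + s - j$ is not divisible by $p$ and $\binom{pa}{pb+s-j}$ is itself a multiple of $p$; only a first-order expansion in $p$ is needed. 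Writing $\binom{pa}{pb \pm k}/\binom{pa}{pb}$ as a ratio of $k$-fold products and expanding each factor to first order in $p$ yields
\[
	\binom{pa}{pb+k} \equiv \binom{a}{b}\cdot\frac{(-1)^{k-1} p (a-b)}{k}, \quad \binom{pa}{pb-k} \equiv \binom{a}{b}\cdot\frac{(-1)^{k-1} p b}{k} \pmod{p^2}
\]
for $1 \leq k < p$.

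Summing the off-diagonal terms then gives
\[
	\binom{pa+r}{pb+s} \equiv \binom{a}{b}\binom{r}{s} + p\binom{a}{b}(bA + (a-b)B) \pmod{p^2},
\]
with $A = \sum_{k=1}^{r-s}\frac{(-1)^{k-1}}{k}\binom{r}{s+k}$ and $B = \sum_{k=1}^{s}\frac{(-1)^{k-1}}{k}\binom{r}{s-k}$. Specializing $(a,b) = (1,1)$ and $(1,0)$ shows that $(r,s) \in D(p)$ forces $A \equiv 0$ and $B \equiv 0 \pmod p$, and conversely these two congruences make the correction $bA + (a-b)B$ vanish modulo $p$ for every pair $(a,b)$. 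Combined with Proposition~\ref{zero region}, this characterizes $D(p)$ as the set of pairs with $s \leq r$ and $A \equiv B \equiv 0 \pmod p$.

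To match this with~\eqref{rotation congruence}, I would write $\binom{p-1-s}{r-s} = \frac{1}{(r-s)!}\prod_{i=s+1}^{r}(p-i)$ and use $\prod_{i=s+1}^{r}(p-i) \equiv (-1)^{r-s}\frac{r!}{s!}\left(1 - p\sum_{i=s+1}^{r}\frac{1}{i}\right) \pmod{p^2}$ to obtain
\[
	(-1)^{r-s}\binom{p-1-s}{r-s} \equiv \binom{r}{s}\left(1 - p\sum_{i=s+1}^{r}\frac{1}{i}\right) \pmod{p^2},
\]
and similarly for $(-1)^s\binom{p-1-r+s}{s}$. Since $\binom{r}{s}$ is a unit modulo $p$ (as $r < p$), the two congruences in~\eqref{rotation congruence} are equivalent to $\sum_{i=s+1}^{r}\frac{1}{i} \equiv 0 \pmod p$ and $\sum_{i=r-s+1}^{r}\frac{1}{i} \equiv 0 \pmod p$, respectively.

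The proof is then completed by the harmonic-number identity
\[
	\sum_{k=1}^{r-s}\frac{(-1)^{k-1}}{k}\binom{r}{s+k} = \binom{r}{s}\sum_{i=s+1}^{r}\frac{1}{i}
\]
(and its mirror obtained by swapping $s$ and $r-s$), provable by induction on $r-s$ or by a short telescoping argument using Pascal's rule. This identifies $A$ with $\binom{r}{s}\sum_{i=s+1}^{r}\frac{1}{i}$ and $B$ with $\binom{r}{s}\sum_{i=r-s+1}^{r}\frac{1}{i}$, so the two characterizations coincide term-for-term. The main obstacles will be the careful mod-$p^2$ expansion of $\binom{pa}{pb\pm k}$ and this final harmonic-number identity; everything else is routine bookkeeping.
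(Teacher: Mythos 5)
Your proposal is correct, and it reaches the same intermediate criterion as the paper ($H_r \equiv H_{r-s} \equiv H_s \bmod p$, in your notation $A \equiv B \equiv 0$) by a genuinely different expansion. The paper's Theorem~\ref{general congruence} writes $\binom{pa+r}{pb+s} = \frac{(pa)!}{(pb)!\,(pa-pb)!}\cdot\frac{\prod_{i=1}^{r}(pa+i)}{\prod_{i=1}^{s}(pb+i)\prod_{i=1}^{r-s}(pa-pb+i)}$, applies Jacobsthal to the first factor, and expands each product to first order in $p$, so that the harmonic numbers appear immediately as the coefficients of $pa$ and $pb$. You instead use the Vandermonde convolution, apply Jacobsthal to the diagonal term $j=s$, and expand the off-diagonal terms $\binom{pa}{pb\pm k}$ (your formulas for these are correct, including the degenerate cases $b>a$ and $pb-k<0$, where both sides vanish); the resulting correction $p\binom{a}{b}\bigl(bA+(a-b)B\bigr)$ agrees with the paper's $p\binom{a}{b}\binom{r}{s}\bigl(a(H_r-H_{r-s})+b(H_{r-s}-H_s)\bigr)$ once one knows $A=\binom{r}{s}(H_r-H_s)$ and $B=\binom{r}{s}(H_r-H_{r-s})$. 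The price of your route is exactly that alternating-sum identity, which you rightly flag as the main lemma to supply; it is true (e.g.\ via the partial-fraction identity $\sum_{k=0}^{n}(-1)^k\binom{n}{k}/(x+k)=n!/\prod_{k=0}^{n}(x+k)$, or by induction), whereas the paper's factorization produces the harmonic differences directly and never needs it. The remainder of your argument --- the specializations $(a,b)=(1,1)$ and $(1,0)$ for the converse, the appeal to Proposition~\ref{zero region} for $s>r$, and the expansion of $\prod_{i=s+1}^{r}(p-i)$ showing that each congruence in~\eqref{rotation congruence} modulo $p^2$ is equivalent to a harmonic-number congruence modulo $p$ --- coincides with the paper's proof of Theorem~\ref{characterization with third statement} and Proposition~\ref{general rotation}.
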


For certain classes of primes, $D(p)$ contains digit pairs that correspond to simple geometric points in the triangle.
For example, if $p \equiv 1 \mod 3$ then the center of the triangle has integer coordinates, namely $r = \frac{2}{3} \grayparen{p - 1}$ and $s = \frac{1}{3} \grayparen{p - 1}$.
Moreover, $p \equiv 1 \mod 6$ in this case, so the coordinates $r$ and $s$ are even, and $1 = \grayparen{-1}^{r - s} = \grayparen{-1}^s$.
Since the center is invariant under rotation about itself, the point $(r, s)$ satisfies Congruence~\eqref{rotation congruence}.
Consequently, $(r, s) \in D(p)$ and we obtain the following congruence.

\begin{corollary}\label{center point}
If $p \equiv 1 \mod 3$, then
\[
	\binom{p a + \frac{2}{3} \grayparen{p - 1}}{p b + \frac{1}{3} \grayparen{p - 1}}
	\equiv
	\binom{a}{b} \binom{\frac{2}{3} \grayparen{p - 1}}{\frac{1}{3} \grayparen{p - 1}} \mod p^2
\]
for all $a \geq 0$ and $b \geq 0$.
\end{corollary}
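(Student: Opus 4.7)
The plan is to reduce Corollary \ref{center point} to a direct application of Theorem \ref{characterization}. Set $r = \tfrac{2}{3}(p-1)$ and $s = \tfrac{1}{3}(p-1)$. First I would verify that $(r,s)$ is a fixed point of the rotation map $(r,s) \mapsto (p-1-s,\,r-s)$ discussed in the paragraph preceding the theorem. A short computation gives $r-s = \tfrac{1}{3}(p-1) = s$, $p-1-s = \tfrac{2}{3}(p-1) = r$, and $p-1-r+s = \tfrac{2}{3}(p-1) = r$. Consequently
\[
    \binom{r}{s} = \binom{p-1-s}{r-s} = \binom{p-1-r+s}{s},
\]
so all three binomial coefficients in the rotation congruence \eqref{rotation congruence} are literally equal as integers.

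Next I would argue that the signs are all $+1$. Since $p \equiv 1 \pmod 3$ and $p$ is an odd prime (note $p=3$ is excluded because $3 \not\equiv 1 \pmod 3$), the Chinese Remainder Theorem forces $p \equiv 1 \pmod 6$, so $p - 1$ is divisible by $6$. Thus $s = \tfrac{1}{3}(p-1)$ is even, and $r-s = s$ is even as well, giving $(-1)^{s} = (-1)^{r-s} = 1$.

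Combining these two observations, the three quantities in \eqref{rotation congruence} are all equal to $\binom{r}{s}$, so Congruence~\eqref{rotation congruence} holds trivially modulo $p^2$ (indeed, as an equality of integers). Since also $s \leq r$, Theorem~\ref{characterization} applies and yields $(r,s) \in D(p)$, which is exactly the statement of the corollary.

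I do not anticipate a real obstacle: the work has essentially been done in Theorem \ref{characterization}, and all that remains is a routine verification that the proposed center point is rotation-invariant with the correct parities. The only subtle point to not overlook is confirming $p \equiv 1 \pmod 6$ (rather than merely $\pmod 3$) so that the sign factors collapse to $+1$.
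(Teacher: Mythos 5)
Your proposal is correct and follows essentially the same route as the paper: both verify that $(r,s) = \left(\tfrac{2}{3}(p-1), \tfrac{1}{3}(p-1)\right)$ is fixed by the rotation so that the three binomial coefficients in Congruence~\eqref{rotation congruence} coincide, note that $p \equiv 1 \bmod 6$ makes the sign factors equal to $1$, and then invoke Theorem~\ref{characterization}. Your writeup is if anything slightly more explicit about the fixed-point computation and the exclusion of $p = 3$, but the argument is the same.
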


We can iterate Corollary~\ref{center point} for the particular numbers $a = \frac{2}{3} \grayparen{p - 1} \sum_{i = 0}^{\ell - 1} p^i = \frac{2}{3} \grayparen{p^\ell - 1}$ and $b = \frac{1}{3} \grayparen{p^\ell - 1}$ whose base-$p$ representations consist of $\ell$ copies of the digits $\frac{2}{3} \grayparen{p - 1}$ and $\frac{1}{3} \grayparen{p - 1}$, respectively.
Therefore, if $p \equiv 1 \mod 3$ and $\ell \geq 0$, then
\[
	\binom{\frac{2}{3} \grayparen{p^\ell - 1}}{\frac{1}{3} \grayparen{p^\ell - 1}}
	\equiv
	\binom{\frac{2}{3} \grayparen{p - 1}}{\frac{1}{3} \grayparen{p - 1}}^\ell \mod p^2.
\]
The value of $\binom{2 \grayparen{p - 1} / 3}{\grayparen{p - 1} / 3}$ modulo $p$ was studied by Jacobi, and its value modulo $p^2$ was shown by Yeung~\cite[Theorem~4.13]{Yeung} to be $\binom{2 \grayparen{p - 1} / 3}{\grayparen{p - 1} / 3} \equiv -A + \frac{p}{A} \mod p^2$, where $4 p = A^2 + 27 B^2$ and the sign of $A$ is chosen so that $A \equiv 1 \mod 3$.

A prime $p$ is a \emph{Wieferich prime} if $2^{p - 1} \equiv 1 \mod p^2$.
Only two such primes are known: $1093$ and $3511$.
It will follow from the proof of Theorem~\ref{characterization} that $p$ is a Wieferich prime if and only if $\{(\frac{p - 1}{2}, 0), (\frac{p - 1}{2}, \frac{p - 1}{2}), (p - 1, \frac{p - 1}{2})\} \subseteq D(p)$.
These digits pairs correspond to the midpoints of the three edges of the triangle.
Morley~\cite{Morley} proved that $\binom{p - 1}{\grayparen{p - 1}/2} \equiv (-1)^{\grayparen{p - 1}/2} 4^{p - 1} \mod p^3$ for every prime $p \geq 5$.
In particular, $\binom{p - 1}{\grayparen{p - 1}/2} \equiv (-1)^{\grayparen{p - 1}/2} \mod p^2$ for Wieferich primes.

An interesting question, which we do not address here, is this:
What else can be said about the size of $D(p)$ as a function of $p$?
The following table lists the elements of $D(p)$ for the first ten primes.
\[
	\begin{array}{r|l}
		p & D(p) \\
		\hline
		2 & \{(0, 0)\} \\
		3 & \{(0, 0), (2, 0), (2, 2)\} \\
		5 & \{(0, 0), (4, 0), (4, 4)\} \\
		7 & \{(0, 0), (4, 2), (6, 0), (6, 6)\} \\
		11 & \{(0, 0), (3, 0), (3, 3), (7, 0), (7, 7), (10, 0), (10, 3), (10, 7), (10, 10)\} \\
		13 & \{(0, 0), (8, 4), (12, 0), (12, 12)\} \\
		17 & \{(0, 0), (9, 2), (9, 7), (14, 7), (16, 0), (16, 16)\} \\
		19 & \{(0, 0), (12, 6), (18, 0), (18, 18)\} \\
		23 & \{(0, 0), (22, 0), (22, 22)\} \\
		29 & \{(0, 0), (13, 0), (13, 13), (15, 0), (15, 15), (28, 0), (28, 13), (28, 15), (28, 28)\}
	\end{array}
\]

Theorem~\ref{characterization} was suggested by an analogous result for the Ap\'ery numbers, which are defined by $A(n) = \sum_{k = 0}^n \binom{n}{k}^2 \binom{n + k}{k}^2$.
Gessel~\cite{Gessel} showed that the Ap\'ery numbers satisfy the one-dimensional Lucas congruence $A(p n + r) \equiv A(n) A(r) \mod p$ for all $r \in \{0, 1, \dots, p - 1\}$ and all $n \geq 0$.
For certain values of $r$, this congruence also holds modulo~$p^2$.
Gessel noticed that $A(3 n + r) \equiv A(n) A(r) \mod 9$ for all $r \in \{0, 1, 2\}$.
By computing an automaton for the Ap\'ery numbers modulo~$25$, Yassawi and the author~\cite[Theorem~3.31]{Rowland--Yassawi} showed that $A(5 n + r) \equiv A(n) A(r) \mod 25$ if $r \in \{0, 2, 4\}$.
This was recently generalized to all primes~\cite{Rowland--Yassawi--Krattenthaler}.
Namely, the digits $r \in \{0, 1, \dots, p - 1\}$ for which all $n \geq 0$ satisfy
\[
	A(p n + r) \equiv A(n) A(r) \mod p^2
\]
are precisely the digits for which $A(r) \equiv A(p - 1 - r) \mod p^2$.
The reflection symmetry $A(r) \equiv A(p - 1 - r) \mod p$ was established by Malik and Straub~\cite[Lemma~6.2]{Malik--Straub} for all $r \in \{0, 1, \dots, p - 1\}$.
Therefore, the elements of both $D(p)$ and the analogous set for the Ap\'ery numbers can be characterized as those for which a certain symmetry modulo~$p$ in fact holds modulo~$p^2$.

In light of Theorem~\ref{characterization}, it is natural to ask about digit pairs $(r, s)$ for which the Lucas congruence holds modulo $p^3$ for all $a \geq 0$ and $b \geq 0$.
Experiments suggest that for each prime $p \geq 5$ there are exactly three: $(0, 0)$, $(p - 1, 0)$, and $(p - 1, p - 1)$.
However, it is conceivable that certain primes support more.
We leave this as an open question.

\section{A general congruence}

To prove Theorem~\ref{characterization}, we first prove a general congruence for $\binom{p a + r}{p b + s}$ modulo~$p^2$.
Let $H_n = 1 + \frac{1}{2} + \frac{1}{3} + \dots + \frac{1}{n}$ be the $n$th harmonic number.
(In particular, the $0$th harmonic number is the empty sum $H_0 = 0$.)
For $r \in \{0, 1, \dots, p - 1\}$, the denominator of $H_r$ is not divisible by $p$, so we can interpret $H_n$ modulo~$p$ and modulo~$p^2$.

\begin{theorem}\label{general congruence}
Let $p$ be a prime.
If $0 \leq s \leq r \leq p - 1$, $a \geq 0$, and $b \geq 0$, then
\[
	\binom{p a + r}{p b + s}
	\equiv \binom{a}{b} \binom{r}{s} \graysizedparen{1 + p a \, \grayparen{H_r - H_{r - s}} + p b \, \grayparen{H_{r - s} - H_s}} \mod p^2.
\]
\end{theorem}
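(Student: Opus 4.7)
The plan is a direct calculation that separates $\binom{p a + r}{p b + s}$ into the top-level factor $\binom{p a}{p b}$ and a low-order correction controlled by harmonic sums. If $a < b$, then $p a + r \leq p a + p - 1 < p b \leq p b + s$, so the left side is $0$, while $\binom{a}{b} = 0$ makes the right side $0$ as well; hence I may assume $a \geq b$.

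In that case,
\[
	\binom{p a + r}{p b + s}
	= \binom{p a}{p b} \cdot \frac{\prod_{i=1}^{r} (p a + i)}{\prod_{i=1}^{s} (p b + i) \cdot \prod_{i=1}^{r - s} (p (a - b) + i)}
\]
holds as an identity of rationals, and since $0 \leq s, r - s \leq r \leq p - 1$, every index $i$ appearing in these products is coprime to $p$. Writing $p c + i = i \, (1 + p c / i)$ and expanding yields
\[
	\prod_{i=1}^{n} (p c + i) \equiv n! \, (1 + p c \, H_n) \mod p^2
\]
for $0 \leq n \leq p - 1$, because every cross term in the expansion carries at least two factors of $p$. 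I would apply this with $(c, n) = (a, r), (b, s), (a - b, r - s)$ and invert the two denominator factors via $(1 + p X)^{-1} \equiv 1 - p X \mod p^2$.

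After multiplying out and discarding all products containing two or more factors of $p$, the fraction collapses to
\[
	\binom{r}{s} \graysizedparen{1 + p a \, H_r - p b \, H_s - p (a - b) \, H_{r - s}},
\]
and rearranging $-p (a - b) H_{r - s} = -p a \, H_{r - s} + p b \, H_{r - s}$ produces the asymmetric combination $p a \, (H_r - H_{r - s}) + p b \, (H_{r - s} - H_s)$ in the theorem statement. Finally, I would invoke Congruence~\eqref{Jacobsthal congruence} in its mod-$p^2$ form, which is valid for every prime, to replace $\binom{p a}{p b}$ by $\binom{a}{b}$. The only step that needs real care is bookkeeping: verifying that the cross terms in the product expansions are genuinely $\equiv 0 \mod p^2$, and tracking the signs when inverting the denominators and collecting the harmonic pieces. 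No clever idea is required beyond that.
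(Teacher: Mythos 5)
Your proposal is correct and follows essentially the same route as the paper: split off $\binom{pa}{pb}$, reduce it to $\binom{a}{b}$ via the mod-$p^2$ case of Congruence~\eqref{Jacobsthal congruence}, expand each product $\prod_{i=1}^{n}(pc+i)\equiv n!\,(1+pc\,H_n) \bmod p^2$, and invert the denominator factors to first order in $p$. The bookkeeping you flag is exactly what the paper carries out, so no changes are needed.
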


\begin{proof}
If $b > a$, then $\binom{p a + r}{p b + s} = 0 = \binom{a}{b}$, so the congruence holds.
Assume $b \leq a$.
By breaking a factorial into two products, we obtain
\begin{align*}
	\binom{p a + r}{p b + s}
	&= \frac{\grayparen{p a + r}!}{\grayparen{p b + s}! \grayparen{p a - p b + r - s}!} \\
	&= \frac{\grayparen{p a}!}{\grayparen{p b}! \grayparen{p a - p b}!} \frac{\prod_{i = 1}^r \grayparen{p a + i}}{\prod_{i = 1}^s \grayparen{p b + i} \prod_{i = 1}^{r - s} \grayparen{p a - p b + i}}.
\end{align*}
The first factor is $\binom{p a}{p b} \equiv \binom{a}{b} \mod p^2$; this is a special case of Congruence~\eqref{Jacobsthal congruence}.
In the second factor, we expand each product and collect terms by like powers of $p$.
Namely, $\prod_{i = 1}^r \grayparen{p a + i} \equiv r! + p a \sum_{i = 1}^r \frac{r!}{i} \mod p^2$.
This gives
\begin{align*}
	\binom{p a + r}{p b + s}
	&\equiv \binom{a}{b} \frac{r!}{s! \grayparen{r - s}!} \frac{1 + p a H_r}{\graysizedparen{1 + p b H_s} \graysizedparen{1 + p \, \grayparen{a - b} H_{r - s}}} \mod p^2 \\
	&\equiv \binom{a}{b} \binom{r}{s} \graysizedparen{1 + p a H_r} \graysizedparen{1 - p b H_s} \graysizedparen{1 - p \, \grayparen{a - b} H_{r - s}} \mod p^2 \\
	&\equiv \binom{a}{b} \binom{r}{s} \graysizedparen{1 + p a \, \grayparen{H_r - H_{r - s}} + p b \, \grayparen{H_{r - s} - H_s}} \mod p^2
\end{align*}
as desired.
\end{proof}

\section{Symmetries of $D(p)$}

In this section, we establish that $D(p)$ possesses the symmetries of the equilateral triangle.
In particular, we prove Theorem~\ref{characterization}.
The reflection symmetry $\binom{a}{b} = \binom{a}{a - b}$ of Pascal's triangle is familiar.
Next we show that $D(p)$ also exhibits this symmetry.

\begin{proposition}\label{reflection symmetry}
Let $p$ be a prime.
If $(r, s) \in D(p)$, then $(r, r - s) \in D(p)$.
\end{proposition}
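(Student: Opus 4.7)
The plan is to reduce the proposition to the defining property of $D(p)$ by using the classical reflection symmetry $\binom{n}{k} = \binom{n}{n-k}$ of Pascal's triangle. Suppose $(r,s) \in D(p)$ and fix arbitrary $a, b \geq 0$; we must show
\[
	\binom{p a + r}{p b + (r - s)} \equiv \binom{a}{b} \binom{r}{r - s} \mod p^2.
\]

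First I would dispatch the trivial range $b > a$. In that case $\binom{a}{b} = 0$, and since $p(b-a) \geq p > p-1 \geq s$, we have $pb + (r-s) > pa + r$, so $\binom{pa+r}{pb+(r-s)} = 0$ as well, and the congruence holds.

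For the main case $b \leq a$, I would apply complementation inside the binomial coefficient on the left:
\[
	\binom{p a + r}{p b + (r - s)} = \binom{p a + r}{(p a + r) - (p b + (r - s))} = \binom{p a + r}{p (a - b) + s}.
\]
Setting $b' = a - b \geq 0$, the right side is $\binom{p a + r}{p b' + s}$, to which the hypothesis $(r, s) \in D(p)$ applies directly, yielding
\[
	\binom{p a + r}{p b' + s} \equiv \binom{a}{b'} \binom{r}{s} \mod p^2.
\]
Finally, $\binom{a}{b'} = \binom{a}{a - b} = \binom{a}{b}$ and $\binom{r}{s} = \binom{r}{r - s}$, giving the desired congruence.

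There is no real obstacle; the only point to be careful about is verifying that $pb + (r-s)$ really does exceed $pa + r$ whenever $b > a$, which uses $s \leq p - 1$, and that the complementation is valid (i.e., $pa + r \geq pb + (r-s)$) when $b \leq a$, which uses $s \geq 0$. Both are guaranteed by the hypothesis $s \in \{0, 1, \dots, p-1\}$ built into the definition of $D(p)$.
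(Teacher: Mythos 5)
Your proof is correct and is essentially identical to the paper's: the same split into the vacuous case $b > a$ and the main case $b \leq a$, followed by complementation $\binom{pa+r}{pb+r-s} = \binom{pa+r}{p(a-b)+s}$ and an application of the hypothesis at $(a, a-b)$. The only detail the paper makes explicit that you leave implicit is that $s \leq r$ (which follows from Proposition~\ref{zero region} since $(r,s) \in D(p)$), needed so that $r - s$ is a legitimate digit and the lower index $pb + (r-s)$ is nonnegative even when $b = 0$.
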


\begin{proof}
Let $(r, s) \in D(p)$.
By Proposition~\ref{zero region}, $s \leq r$.
By assumption, $\binom{p a + r}{p b + s} \equiv \binom{a}{b} \binom{r}{s} \mod p^2$ for all $a \geq 0$ and $b \geq 0$.
Fix $a$ and $b$.
We would like to show $\binom{p a + r}{p b + r - s} \equiv \binom{a}{b} \binom{r}{r - s} \mod p^2$.
There are two cases.
If $b > a$, then $s < p \leq p \, \grayparen{b - a}$.
It follows that $p a + r < p b + r - s$.
Therefore $\binom{p a + r}{p b + r - s} = 0 = \binom{a}{b} \binom{r}{r - s}$, so the congruence holds.
On the other hand, if $b \leq a$, the reflection symmetry of Pascal's triangle gives
\[
	\binom{p a + r}{p b + r - s}
	= \binom{p a + r}{\grayparen{p a + r} - \grayparen{p b + r - s}}
	= \binom{p a + r}{p \, \grayparen{a - b} + s}.
\]
Since $(r, s) \in D(p)$, this implies
\begin{align*}
	\binom{p a + r}{p b + r - s}
	&\equiv \binom{a}{a - b} \binom{r}{s} \mod p^2 \\
	&= \binom{a}{b} \binom{r}{r - s} \\
	&\equiv \binom{p a}{p b} \binom{r}{r - s} \mod p^2,
\end{align*}
as desired.
In both cases, $\binom{p a + r}{p b + r - s} \equiv \binom{a}{b} \binom{r}{r - s} \mod p^2$, so $(r, r - s) \in D(p)$.
\end{proof}

In addition to the reflection symmetry, the first $p$ rows of Pascal's triangle also exhibit rotational symmetry modulo~$p$ up to sign.
To see this, first we prove the following congruence modulo~$p^2$.

\begin{proposition}\label{general rotation}
Let $p$ be a prime.
If $0 \leq s \leq r \leq p - 1$, then
\begin{equation}\label{rotation congruence modulo p^2}
	\binom{r}{s}
	\equiv \grayparen{-1}^{r - s} \binom{p - 1 - s}{r - s} \grayparen{1 + p H_r - p H_s} \mod p^2.
\end{equation}
\end{proposition}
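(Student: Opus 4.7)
The plan is to compute the ratio $\binom{p-1-s}{r-s}/\binom{r}{s}$ directly by expanding as a product, reduce each factor modulo $p^2$, and recognize the harmonic sum $H_r - H_s$ as what drops out.

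First I would write
\[
    \frac{\binom{p - 1 - s}{r - s}}{\binom{r}{s}}
    = \frac{(p - 1 - s)!\, s!}{(p - 1 - r)!\, r!}
    = \frac{1}{r!/s!} \cdot \prod_{i = 0}^{r - s - 1}(p - r + i).
\]
Reindexing the product by $j = r - s - i$ turns it into $\prod_{j = 1}^{r - s}(p - s - j)$, a product of $r - s$ factors each of the form $-(s + j) + p$. Since $1 \leq s + j \leq r \leq p - 1$, each $s + j$ is a unit modulo $p^2$, so I can factor out $-(s + j)$ to get
\[
    \prod_{j = 1}^{r - s}(p - s - j)
    = (-1)^{r - s} \prod_{j = 1}^{r - s}(s + j) \cdot \prod_{j = 1}^{r - s}\!\left(1 - \frac{p}{s + j}\right).
\]
The middle product equals $r!/s!$, which cancels the $r!/s!$ already present; and expanding the last product modulo $p^2$ and dropping the $p^2$ cross terms gives $1 - p \sum_{j = 1}^{r - s} \tfrac{1}{s + j} = 1 - p(H_r - H_s)$. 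Putting this together,
\[
    \binom{p - 1 - s}{r - s}
    \equiv (-1)^{r - s} \binom{r}{s}\bigl(1 - p(H_r - H_s)\bigr) \mod p^2.
\]

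To finish, I would multiply both sides by $(-1)^{r - s}(1 + p(H_r - H_s))$ and use $\bigl(1 - p(H_r - H_s)\bigr)\bigl(1 + p(H_r - H_s)\bigr) = 1 - p^2(H_r - H_s)^2 \equiv 1 \pmod{p^2}$ (the denominator of $H_r - H_s$ being coprime to $p$ since $r, s < p$). This yields the stated form of Congruence~\eqref{rotation congruence modulo p^2}.

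The only real obstacle is the index bookkeeping in the reindexing step and making sure each $s + j$ is genuinely invertible modulo $p^2$; the rest is a clean expansion in which the harmonic sum falls out on its own. No deep input is needed beyond the standard first-order expansion $\prod (1 - pc_j) \equiv 1 - p \sum c_j \pmod{p^2}$.
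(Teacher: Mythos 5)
Your proof is correct and uses essentially the same technique as the paper: expand a product of factors of the form $p - i$ to first order in $p$, factor out $(-1)$'s, and read off the harmonic sums. The only organizational difference is that you expand the partial product $(p-1-s)!/(p-1-r)! = \prod_{j=1}^{r-s}(p-s-j)$ directly, so $H_r - H_s$ appears immediately, whereas the paper expands the full factorials $(p-1-r)!$ and $(p-1-s)!$ separately and cancels an intermediate $H_{p-1}$; your bookkeeping is slightly leaner but the underlying idea is identical.
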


\begin{proof}
Similar to the proof of Theorem~\ref{general congruence}, we expand the product $\grayparen{p - 1 - r}!$ and collect terms by like powers of $p$:
\begin{align*}
	r! \grayparen{p - 1 - r}!
	&= r! \prod_{i = r + 1}^{p - 1} \grayparen{p - i} \\
	&\equiv r! \graysizedparen{\prod_{i = r + 1}^{p - 1} \grayparen{-i} + p \, \grayparen{-1}^{p - 1 - r} \frac{\grayparen{p - 1}!}{r!} \sum_{i = r + 1}^{p - 1} \frac{1}{-i}} \mod p^2 \\
	&= \grayparen{-1}^{p - 1 - r} \grayparen{p - 1}! \graysizedparen{1 - p \, \grayparen{H_{p - 1} - H_r}}.
\end{align*}
Therefore
\begin{align*}
	\frac{r! \grayparen{p - 1 - r}!}{s! \grayparen{p - 1 - s}!}
	&\equiv \grayparen{-1}^{r - s} \frac{1 - p \, \grayparen{H_{p - 1} - H_r}}{1 - p \, \grayparen{H_{p - 1} - H_s}} \mod p^2 \\
	&\equiv \grayparen{-1}^{r - s} \grayparen{1 - p \, \grayparen{H_{p - 1} - H_r}} \grayparen{1 + p \, \grayparen{H_{p - 1} - H_s}} \mod p^2 \\
	&\equiv \grayparen{-1}^{r - s} \grayparen{1 + p H_r - p H_s} \mod p^2.
\end{align*}
This is equivalent to
\[
	\frac{r!}{s!} \equiv \grayparen{-1}^{r - s} \frac{\grayparen{p - 1 - s}!}{\grayparen{p - 1 - r}!} \grayparen{1 + p H_r - p H_s} \mod p^2.
\]
Dividing both sides by $\grayparen{r - s}!$ produces Congruence~\eqref{rotation congruence modulo p^2}.
\end{proof}

Modulo~$p$, we obtain the following rotational symmetry.

\begin{corollary}\label{rotation}
Let $p$ be a prime.
If $0 \leq s \leq r \leq p - 1$, then
\[
	\binom{r}{s}
	\equiv \grayparen{-1}^{r - s} \binom{p - 1 - s}{r - s}
	\mod p.
\]
\end{corollary}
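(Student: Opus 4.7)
The plan is to deduce Corollary~\ref{rotation} directly from Proposition~\ref{general rotation}, since the corollary is simply the mod-$p$ reduction of Congruence~\eqref{rotation congruence modulo p^2}. The only thing to verify is that the correction factor $1 + p H_r - p H_s$ reduces to $1$ modulo~$p$.

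First I would note that because $0 \leq s \leq r \leq p - 1$, each summand of $H_r = 1 + \tfrac{1}{2} + \dots + \tfrac{1}{r}$ has a denominator in $\{1, 2, \dots, p-1\}$, hence coprime to $p$. So $H_r$ (and likewise $H_s$) is a well-defined element of $\mathbb{Z}_{(p)}$, and in particular $p H_r \equiv 0 \pmod p$ and $p H_s \equiv 0 \pmod p$. Consequently $1 + p H_r - p H_s \equiv 1 \pmod p$.

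Reducing the congruence of Proposition~\ref{general rotation} modulo~$p$ therefore yields
\[
    \binom{r}{s} \equiv \grayparen{-1}^{r - s} \binom{p - 1 - s}{r - s} \cdot 1 \equiv \grayparen{-1}^{r - s} \binom{p - 1 - s}{r - s} \mod p,
\]
which is the desired statement. There is essentially no obstacle here; the corollary is an immediate specialization of the already-proved proposition, and the only minor point is the $p$-adic interpretation of the harmonic numbers, which is guaranteed by the hypothesis $r, s \leq p - 1$.
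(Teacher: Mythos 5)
Your proof is correct and matches the paper's approach exactly: the paper presents this corollary as the immediate mod-$p$ reduction of Proposition~\ref{general rotation}, relying on the fact that $p H_r - p H_s \equiv 0 \pmod p$ since the harmonic numbers $H_r$ and $H_s$ have denominators coprime to $p$. Your added care in justifying the $p$-adic interpretation of the harmonic numbers is consistent with the paper's own remarks preceding Theorem~\ref{general congruence}.
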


We now use Theorem~\ref{general congruence} and Proposition~\ref{general rotation} to prove Theorem~\ref{characterization}, adding a third equivalent statement.
We assume $s \leq r$, since otherwise $(r, s) \notin D(p)$ by Proposition~\ref{zero region}.

\begin{theorem}\label{characterization with third statement}
Let $p$ be a prime, and let $0 \leq s \leq r \leq p - 1$.
The following are equivalent.
\begin{itemize}
\item[1.]
$(r, s) \in D(p)$.
\item[2.]
$H_r \equiv H_{r - s} \equiv H_s \mod p$.
\item[3.]
$
	\binom{r}{s}
	\equiv \grayparen{-1}^{r - s} \binom{p - 1 - s}{r - s}
	\equiv \grayparen{-1}^s \binom{p - 1 - r + s}{s} \mod p^2
$.
\end{itemize}
\end{theorem}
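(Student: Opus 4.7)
The plan is to route both equivalences through Theorem~\ref{general congruence} and Proposition~\ref{general rotation}, which are essentially tailor-made for this job.

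For the equivalence $(1) \Leftrightarrow (2)$, I would start from Theorem~\ref{general congruence}, which asserts
\[
	\binom{p a + r}{p b + s} - \binom{a}{b}\binom{r}{s}
	\equiv p \binom{a}{b}\binom{r}{s} \bigl(a(H_r - H_{r-s}) + b(H_{r-s} - H_s)\bigr) \pmod{p^2}.
\]
Since $\binom{r}{s} \not\equiv 0 \pmod{p}$ (because $0 \le s \le r \le p-1$), the pair $(r,s)$ lies in $D(p)$ if and only if $\binom{a}{b}\bigl(a(H_r - H_{r-s}) + b(H_{r-s} - H_s)\bigr) \equiv 0 \pmod{p}$ for every $a,b \ge 0$. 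Specializing $(a,b) = (1,0)$ forces $H_r \equiv H_{r-s} \pmod{p}$, and then $(a,b) = (1,1)$ forces $H_r \equiv H_s \pmod{p}$; together these give (2). Conversely, (2) makes the bracketed expression vanish modulo $p$, so the congruence holds for every $a,b$, yielding (1).

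For the equivalence $(2) \Leftrightarrow (3)$, I would apply Proposition~\ref{general rotation} twice. The proposition directly gives
\[
	\binom{r}{s} \equiv (-1)^{r-s}\binom{p-1-s}{r-s}(1 + p H_r - p H_s) \pmod{p^2},
\]
and since $\binom{p-1-s}{r-s} \not\equiv 0 \pmod{p}$ by Corollary~\ref{rotation}, the first congruence of (3) is equivalent to $H_r \equiv H_s \pmod{p}$. For the second congruence of (3), I would apply Proposition~\ref{general rotation} with the pair $(r, r-s)$ in place of $(r,s)$, using $\binom{r}{r-s} = \binom{r}{s}$, to get
\[
	\binom{r}{s} \equiv (-1)^{s}\binom{p-1-r+s}{s}(1 + p H_r - p H_{r-s}) \pmod{p^2};
\]
the same nonvanishing argument shows this is equivalent to $H_r \equiv H_{r-s} \pmod{p}$. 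Combining the two conditions reproduces exactly (2), so $(2) \Leftrightarrow (3)$.

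I do not anticipate a serious obstacle, since both cited results already carry the harmonic-number data that drives the equivalences; the only care required is (a) to justify that the factor $\binom{a}{b}$ in the $(1) \Leftrightarrow (2)$ step does not obstruct the reduction (it does not, because the choices $(a,b)=(1,0)$ and $(1,1)$ have $\binom{a}{b}=1$), and (b) to notice that the second congruence of (3) is not a literal second invocation of Proposition~\ref{general rotation} with new data but rather the same proposition applied after using $\binom{r}{s}=\binom{r}{r-s}$. This symmetry of Pascal's triangle does all the work in turning the two reflection congruences of Proposition~\ref{general rotation} into the three-term orbit congruence of Theorem~\ref{characterization}.
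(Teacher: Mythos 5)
Your proposal is correct and follows essentially the same route as the paper: Theorem~\ref{general congruence} with the specializations $(a,b)=(1,0)$ and $(1,1)$ for $(1)\Leftrightarrow(2)$, and Proposition~\ref{general rotation} applied to both $(r,s)$ and $(r,r-s)$ together with $\binom{r}{r-s}=\binom{r}{s}$ for $(2)\Leftrightarrow(3)$. The only cosmetic difference is that the paper's choice $(a,b)=(1,1)$ extracts $H_{r-s}\equiv H_s$ rather than $H_r\equiv H_s$, which is equivalent given the first specialization.
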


\begin{proof}
First we show that
\begin{equation}\label{Lucas congruence modulo p^2}
	\binom{p a + r}{p b + s} \equiv \binom{a}{b} \binom{r}{s} \mod p^2
\end{equation}
for all $a \geq 0$ and $b \geq 0$ if and only if $H_r \equiv H_{r - s} \equiv H_s \mod p$.
By Theorem~\ref{general congruence},
\[
	\binom{p a + r}{p b + s}
	\equiv \binom{a}{b} \binom{r}{s} \graysizedparen{1 + p a \, \grayparen{H_r - H_{r - s}} + p b \, \grayparen{H_{r - s} - H_s}} \mod p^2.
\]
Clearly, if $H_r \equiv H_{r - s} \equiv H_s \mod p$, then Congruence~\eqref{Lucas congruence modulo p^2} holds for all $a \geq 0$ and $b \geq 0$.
Conversely, assume Congruence~\eqref{Lucas congruence modulo p^2} holds for all $a \geq 0$ and $b \geq 0$.
Since $\binom{r}{s}$ is not divisible by $p$, this, along with Theorem~\ref{general congruence}, implies
\[
	\binom{a}{b}
	\equiv \binom{a}{b} \graysizedparen{1 + p a \, \grayparen{H_r - H_{r - s}} + p b \, \grayparen{H_{r - s} - H_s}} \mod p^2.
\]
Setting $a = 1$ and $b = 0$ shows that $H_r \equiv H_{r - s} \mod p$.
Now setting $a = 1$ and $b = 1$ shows that $H_{r - s} \equiv H_s \mod p$.

Next we show the equivalence of the second and third statements.
We see from Proposition~\ref{general rotation} that $H_r \equiv H_s \mod p$ if and only if
\[
	\binom{r}{s}
	\equiv \grayparen{-1}^{r - s} \binom{p - 1 - s}{r - s} \mod p^2.
\]
Similarly, $H_r \equiv H_{r - s} \mod p$ if and only if
\[
	\binom{r}{r - s}
	\equiv \grayparen{-1}^s \binom{p - 1 - r + s}{s} \mod p^2.
\]
Since $\binom{r}{r - s} = \binom{r}{s}$, this implies that $H_r \equiv H_{r - s} \equiv H_s \mod p$ if and only if
$
	\binom{r}{s}
	\equiv \grayparen{-1}^{r - s} \binom{p - 1 - s}{r - s}
	\equiv \grayparen{-1}^s \binom{p - 1 - r + s}{s} \mod p^2
$.
\end{proof}

Theorem~\ref{characterization with third statement} and Proposition~\ref{reflection symmetry} imply that $D(p)$ is invariant under the symmetries of the equilateral triangle.

We conclude by returning to the discussion of Wieferich primes.
Eisenstein~\cite{Eisenstein} showed that $H_{\grayparen{p - 1}/2} \equiv \frac{2 - 2^p}{p} \mod p$ for $p \geq 3$.
Therefore $p$ is a Wieferich prime if and only if $H_{\grayparen{p - 1}/2} \equiv 0 \mod p$, which is equivalent to $(\frac{p - 1}{2}, \frac{p - 1}{2}) \in D(p)$ by Theorem~\ref{characterization with third statement}.
By rotational symmetry, $p$ is a Wieferich prime if and only if
\[
	\left\{(\tfrac{p - 1}{2}, 0), (\tfrac{p - 1}{2}, \tfrac{p - 1}{2}), (p - 1, \tfrac{p - 1}{2})\right\} \subseteq D(p).
\]

\section{Acknowledgment}

Thanks to Erin Craig for excellent input on the design of the graphics and for improvements to the presentation.

\end{document}